\numberwithin{equation}{section}
\newtheorem{theorem}{Theorem}[section]
\newtheorem{proposition}[theorem]{Proposition}
\newtheorem{lemma}[theorem]{Lemma}
\begin{document}
\author{Alexander E Patkowski}
\title{A note on applications of the extension of Abel's lemma}

\maketitle
\begin{abstract} We offer some new applications of an extension of Abel's lemma, as well as its more general form established
by Andrews and Freitas. A nice connection is established between this lemma and series involving the Riemann zeta function. \end{abstract}

\keywords{\it Keywords: \rm Abel's lemma; Riemann zeta function; Series involving the zeta function.}

\subjclass{ \it 2010 Mathematics Subject Classification 65B10; 11M06}

\section{Introduction}
In a paper by Andrews and Freitas [4], the extension of Abel's lemma was further generalized and several new $q$-series were established. Recall that Abel's lemma is the simple result that $\lim_{z\rightarrow 1^{-}}(1-z)\sum_{n\ge0}a_nz^n =\lim_{n\rightarrow\infty}a_n.$ We use the shifted factorial notation $(a)_n=a(a+1)\cdots (a+n-1)$ in this paper [2]. Their result may
be stated as follows.

\begin{proposition} ([4, Proposition 1.2]) Let $f(z)=\sum_{n\ge0} \alpha_n z^n$ be analytic for $|z|<1,$ and assume that for some positive integer $M$ and a fixed
complex number $\alpha$ we have that (i) $\sum_{n\ge0}(n+1)_{M}(\alpha_{M+n}-\alpha_{M+n-1})$ converges,  and (ii) $\lim_{n\rightarrow \infty} (n+1)_{M}(\alpha_{M+n}-\alpha)=0.$  Then 
$$\frac{1}{M}\lim_{z\rightarrow1^{-}}\left(\frac{\partial^M}{\partial z^M}(1-z)f(z)\right)=\sum_{n\ge0}(n+1)_{M-1}(\alpha-\alpha_{n+M-1}).$$
\end{proposition}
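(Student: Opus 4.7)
The plan is to expand $(1-z)f(z)$ as a power series, differentiate $M$ times termwise, use Abel's classical continuity theorem to push the limit $z\to 1^-$ inside the resulting series, and then reorganize by summation by parts into the stated form.

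First I would write
$$(1-z)f(z) = \sum_{n \ge 0}(\alpha_n - \alpha_{n-1})z^n, \qquad \alpha_{-1} := 0,$$
differentiate termwise $M$ times, and substitute $m = n - M$ to obtain
$$\frac{\partial^M}{\partial z^M}\bigl[(1-z)f(z)\bigr] = \sum_{m \ge 0}(m+1)_M(\alpha_{m+M} - \alpha_{m+M-1})\,z^m.$$
Hypothesis (i) says this coefficient series converges at $z = 1$, so Abel's continuity theorem yields
$$\lim_{z \to 1^-}\frac{\partial^M}{\partial z^M}\bigl[(1-z)f(z)\bigr] = \sum_{m \ge 0}(m+1)_M(\alpha_{m+M} - \alpha_{m+M-1}).$$

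The heart of the argument is then to rewrite this series via Abel summation. Setting $a_m = (m+1)_M$ and $c_m = \alpha_{m+M} - \alpha_{m+M-1}$, the partial sums telescope: $C_N := \sum_{m=0}^N c_m = \alpha_{N+M} - \alpha_{M-1}$. The Abel summation formula
$$\sum_{m=0}^{N}a_m c_m = a_N C_N - \sum_{m=0}^{N-1}(a_{m+1} - a_m)\,C_m,$$
combined with the identity $(m+2)_M - (m+1)_M = M(m+2)_{M-1}$ (so that $a_{m+1} - a_m = M(m+2)_{M-1}$ and $\sum_{m=0}^{N-1}M(m+2)_{M-1} = (N+1)_M - M!$), is the engine. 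Splitting $C_m = (\alpha_{m+M} - \alpha) + (\alpha - \alpha_{M-1})$, the polynomial pieces carrying $\alpha - \alpha_{M-1}$ combine with the telescoping identity to leave only the constant $M!(\alpha - \alpha_{M-1})$; the boundary term $(N+1)_M(\alpha - \alpha_{N+M})$ vanishes in the limit by hypothesis (ii); and the remaining sum, after shifting $n = m+1$, becomes exactly $M\sum_{n \ge 0}(n+1)_{M-1}(\alpha - \alpha_{n+M-1})$.

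The main obstacle is bookkeeping: both $a_N C_N$ and each $(a_{m+1} - a_m)C_m$ grow polynomially in $N$, so the cancellation of the divergent $(N+1)_M$ contributions only becomes visible once $C_m$ is split as above and the polynomial pieces are paired against the telescoping identity rather than against the term controlled by hypothesis (ii). Once this cancellation is carried out cleanly, dividing by $M$ produces the claimed identity.
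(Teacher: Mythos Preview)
The paper does not actually prove this proposition; it is quoted verbatim from Andrews and Freitas [4, Proposition~1.2] and then used as a black box in the proofs of Theorems~2.2 and~2.3. So there is no ``paper's own proof'' to compare against.

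That said, your argument is correct and is essentially the natural proof. The only places worth a second look are harmless: termwise differentiation of a power series inside its disk of convergence is automatic; Abel's continuity theorem is exactly what hypothesis~(i) is designed to trigger; and in the Abel-summation step you correctly identify that the boundary contribution $a_NC_N$ and the constant piece $(\alpha-\alpha_{M-1})\sum(a_{m+1}-a_m)$ both contain a divergent $(N+1)_M$ factor which cancels, leaving only $(N+1)_M(\alpha_{N+M}-\alpha)\to 0$ by hypothesis~(ii). After the shift $n=m+1$, the surviving constant $M!(\alpha-\alpha_{M-1})$ is precisely the $n=0$ term $M\,(1)_{M-1}(\alpha-\alpha_{M-1})$, so the reassembly into $M\sum_{n\ge 0}(n+1)_{M-1}(\alpha-\alpha_{n+M-1})$ is clean.
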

The formula being generalized here is given in [3, Proposition 2.1], where it was used to find generating functions for special values for certain $L$-functions. A corollary 
of the extension of Abel's lemma was also given in [7].
\par 
In the work [1] we find a simple formula attributed there to Christian Goldbach,
\begin{equation}\sum_{n\ge0}(1-\zeta(n+2))=-1.\end{equation}
Now it does not appear any connection has been made between the extension of Abel's lemma and this result, but as we shall demonstrate, it is a simple consequence of it. 
To this end, we shall prove some more general formulas in the next section which we believe are interesting applications of the Andrews-Freitas formula. For this, we will use a result from the work [6]. For some relevant series identities of a similar nature see also [5,8]. The main theorems presented here appear to differ considerably from previous similar examples, such as [8, pg.24, eq.(2.4)] where sums involving $(n)_M$ run over $M,$ since ours run over $n.$ 
\section{Some new theorems}
This section establishes some interesting theorems, which we hope will add value to the Andrews-Freitas formula. For convenience in our proofs, we decided to write down a simple
lemma.
\begin{lemma} If $f(z)$ has no factor $(1-z)^{-1},$ then we may write
$$\lim_{z\rightarrow1^{-}}\frac{\partial^M}{\partial z^M}(1-z)f(z)=-Mf^{(M-1)}(1).$$
\end{lemma}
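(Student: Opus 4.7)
My plan is to apply the general Leibniz rule for higher derivatives of a product to $(1-z)f(z)$. Writing
\[
\frac{\partial^M}{\partial z^M}\bigl[(1-z)f(z)\bigr]=\sum_{k=0}^{M}\binom{M}{k}\Bigl(\frac{\partial^k}{\partial z^k}(1-z)\Bigr)\,f^{(M-k)}(z),
\]
I would next observe that $(1-z)$ is a degree-one polynomial, whose only nonzero derivatives are the zeroth (itself) and the first ($-1$). Consequently only the terms with $k=0$ and $k=1$ survive, and the right-hand side collapses to
\[
(1-z)\,f^{(M)}(z)\;-\;M\,f^{(M-1)}(z).
\]

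Now I would take $z\to 1^{-}$. The hypothesis that $f(z)$ has no factor $(1-z)^{-1}$ is to be read as saying that $f$ is regular at $z=1$, which is consistent with the analytic framework of Proposition~1.1; under this reading each derivative $f^{(M)}(z)$ stays bounded as $z\to 1^{-}$, so the term $(1-z)f^{(M)}(z)$ vanishes in the limit. By continuity $f^{(M-1)}(z)\to f^{(M-1)}(1)$, yielding the advertised value $-M f^{(M-1)}(1)$. (The notation $f^{M-1}(1)$ appearing in the statement should be interpreted as the ordinary $(M-1)$-st derivative evaluated at $1$.)

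There is no real obstacle here; the result is essentially a bookkeeping lemma whose entire content is the truncation of Leibniz's rule to two terms. The only point requiring a small amount of care is recognising that the hypothesis ``no factor $(1-z)^{-1}$'' is precisely what one needs to kill the $(1-z)f^{(M)}(z)$ term in the limit, and to guarantee that $f^{(M-1)}(1)$ is well-defined. Once this observation is made, the proof is a one-line computation, and the lemma will serve in the sequel to convert the left-hand side of Proposition~1.1 into an expression directly involving the $(M-1)$-st derivative of $f$ at $1$.
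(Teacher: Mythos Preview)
Your proposal is correct and follows essentially the same argument as the paper: apply the Leibniz rule to $(1-z)f(z)$, observe that only the $k=0$ and $k=1$ terms survive because $(1-z)$ is linear, and then let $z\to 1^{-}$ so that the $k=0$ term dies. Your explanation of how the hypothesis ``no factor $(1-z)^{-1}$'' is used to kill the $(1-z)f^{(M)}(z)$ term and to ensure $f^{(M-1)}(1)$ is well-defined is, if anything, slightly more explicit than the paper's own wording.
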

\begin{proof} Put $f_1(z)=(1-z),$ and $f_2(z)=f(z).$ Then by the Leibniz rule,
$$\lim_{z\rightarrow1^{-}}\frac{\partial^M}{\partial z^M}(1-z)f(z)=\lim_{z\rightarrow1^{-}}\sum_{j\ge0}\binom {M}{j} f_1^{(j)}f_2^{(M-j)}$$
$$=\lim_{z\rightarrow1^{-}}\binom {M}{1}f_1^{(1)}f_2^{(M-1)}$$
$$=-M\lim_{z\rightarrow1^{-}}f_2^{(M-1)}(z),$$
because if $j=0$ then the term in the sum, $f_1^{(0)},$ is 0 when $z\rightarrow1^{-},$ and for $j>1,$ $f_1^{(j)}=0.$
\end{proof}
As usual, we denote $\gamma$ to be Euler's constant [2]. We also define the polygamma function [2] to be 
the $(M+1)$-th derivative of the logarithm of the Gamma function: $\psi^{(M)}(z)=\frac{\partial^{M+1}}{\partial z^{M+1}}(\log\Gamma(z)).$

\begin{theorem} For positive integers $M,$ we have that 
$$\sum_{n\ge0}(n+1)_{M-1}(1-\zeta(n+M+1))$$
$$=(-1)^{M-1}\sum_{j\ge0}\binom {M-1}{j} j!\psi^{(M-j-1)}(1)+(M-1)!(-1)^M+\gamma(-1)^{M-1}(M-1)!.$$
\end{theorem}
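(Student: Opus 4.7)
The plan is to apply Proposition 1.1 with $\alpha_n = \zeta(n+2)$ and $\alpha = 1$, so that $\alpha - \alpha_{n+M-1} = 1 - \zeta(n+M+1)$ reproduces exactly the summand on the left-hand side. Since $\zeta(k) - 1$ decays geometrically, both hypotheses of the proposition are trivially verified. The essential input (presumably the result from [6]) is the generating function identity
$$ f(z) \;=\; \sum_{n\geq 0}\zeta(n+2)\,z^n \;=\; -\frac{\gamma+\psi(1-z)}{z}, \qquad |z|<1, $$
which follows from the standard expansion $\psi(1-z) = -\gamma - \sum_{k\geq 1}\zeta(k+1)z^k$ around $z=0$.

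Lemma 2.1 cannot be invoked on this $f$ directly, because $\psi(1-z)$ is singular at $z=1$ and so $f$ secretly carries a $(1-z)^{-1}$ factor. The remedy is to substitute the Laurent expansion $\psi(u) = -1/u - \gamma + \sum_{k\geq 1}(-1)^{k+1}\zeta(k+1)u^k$ with $u = 1-z$ directly into $(1-z)f(z)$; the principal parts cancel and one is left with
$$ (1-z)f(z) \;=\; \frac{P(z)}{z}, \qquad P(z)\;=\;1 + \sum_{k\geq 1}(-1)^k\zeta(k+1)(1-z)^{k+1}, $$
where $P$ is analytic at $z=1$ with $P(1)=1$, $P'(1)=0$, and $P^{(j)}(1) = -j!\,\zeta(j)$ for $j\geq 2$. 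Applying the Leibniz rule to the product $P(z)\cdot(1/z)$ and using $(1/z)^{(M-j)}|_{z=1} = (-1)^{M-j}(M-j)!$, the identity $\binom{M}{j}j!(M-j)! = M!$ collapses the expression to a short linear combination of $\zeta(2),\dots,\zeta(M)$ together with a $(-1)^M M!$ term coming from $P(1)=1$.

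The remaining step is cosmetic: convert each $\zeta$-value to a polygamma value via $\psi^{(m)}(1) = (-1)^{m+1}m!\,\zeta(m+1)$ for $m\geq 1$, and then reindex the sum by $j \mapsto M-j-1$ so that the ratio $(M-1)!/(M-1-j)!$ groups into the required $\binom{M-1}{j}j!$. Absorbing the $j=M-1$ entry, namely $(M-1)!\psi(1) = -\gamma(M-1)!$, back into the sum produces the isolated $\gamma$-term of the theorem, and dividing by $M$ as demanded by Proposition 1.1 yields the claimed right-hand side. The main obstacle is purely algebraic bookkeeping: carefully tracking the many sign changes produced by the derivatives of $(1-z)^k$, by the derivatives of $1/z$ at $z=1$, and by the $(-1)^{m+1}$ in the $\zeta$--polygamma identity, and verifying that $\binom{M-1}{j}j!$ is precisely $(M-1)!/(M-1-j)!$.
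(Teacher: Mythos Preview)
Your proposal is correct and follows essentially the same route as the paper: identify $f(z)=\sum_{n\ge0}\zeta(n+2)z^n$ via the digamma Taylor expansion, remove the pole of $\psi(1-z)$ at $z=1$, and compute the $M$-th derivative of $(1-z)f(z)$ by the Leibniz rule. The only difference is in how the pole is handled. The paper uses the functional equation $\psi(1-z)=\psi(2-z)-(1-z)^{-1}$ (equation~(2.3)) and then applies Lemma~2.1 directly to the smooth factor $z^{-1}\psi^{(0)}(2-z)$, so that the derivatives $\psi^{(M-1-j)}(1)$ appear immediately in the answer; you instead substitute the full Laurent series of $\psi$ about $1-z=0$, obtain the result in terms of $\zeta(2),\dots,\zeta(M)$, and then translate back to polygamma values via $\psi^{(m)}(1)=(-1)^{m+1}m!\,\zeta(m+1)$ and reindex. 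This is a harmless extra round-trip, and your bookkeeping sketch is accurate. One small correction: the closed form for $f(z)$ is not ``the result from~[6]''---it is simply the digamma Taylor series, equations~(2.1)--(2.2) of the paper; reference~[6] enters only in the proof of Theorem~2.3.
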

\begin{proof} First we write down the well-known Taylor expansion of the digamma function [1, 2], for $|z|<1,$
\begin{equation} \psi^{(0)}(z+1)=-\gamma-\sum_{k\ge1}\zeta(k+1)(-z)^k.\end{equation}
It is a trivial exercise to re-write (2.1) as 
\begin{equation}-z^{-1} \psi^{(0)}(1-z)-z^{-1}\gamma=\sum_{k\ge0}\zeta(k+2)z^k. \end{equation}
Inserting the functional equation for $\psi^{(0)}(z),$ given by [1, 2]
\begin{equation} \psi^{(0)}(z+1)=\psi^{(0)}(z)+\frac{1}{z},\end{equation}
into (2.2) and multiplying by $(1-z)$ gives
\begin{equation} -z^{-1}(1-z)(\psi^{(0)}(2-z)-(1-z)^{-1})-z^{-1}(1-z)\gamma=(1-z)\sum_{k\ge0}\zeta(k+2)z^k,\end{equation}
Now applying Proposition 1.1 with $\alpha_n=\zeta(n+2),$ and involving (2.4) gives the theorem after applying Lemma 2.1.
\end{proof}

For $M=1$ Theorem 2.2 specializes to Goldbach's formula (1.1). We shall denote $S(n,l)$ to be the Stirling numbers of the second kind [2]. 
\begin{theorem} For positive integers $M$ and $N,$ we have that 
$$\sum_{n\ge0}(n+1)_{M-1}(n+M+1)^{N}(1-\zeta(n+M+1))=\sum_{l\ge1}^{N}S(N+1,l+1)(-1)^{l+1}g_{M,l}$$
$$+(-1)^{M-1}\sum_{j\ge0}\binom {M-1}{j} j!\psi^{(M-j-1)}(1)+\gamma(-1)^{M-1}(M-1)!,$$
where for $l\ge0,$
$$g_{M,l}:=-\sum_{j\ge0}\binom {M-1}{j}(-1)^{M-1-j}\psi^{(l+M-1-j)}(1)\frac{(l-1)!}{(l-1-j)!}.$$
\end{theorem}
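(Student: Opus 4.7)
The plan is to apply Proposition 1.1 with the modified sequence $\alpha_n := (n+2)^N\bigl[\zeta(n+2) - 1\bigr]$ and $\alpha := 0$, so that $\alpha - \alpha_{n+M-1} = (n+M+1)^N\bigl[1 - \zeta(n+M+1)\bigr]$ reproduces the summand on the left. The exponential decay $\zeta(k) - 1 = O(2^{-k})$ makes both convergence hypotheses of the proposition trivial, and also shows that $f(z) := \sum_{n\ge 0}\alpha_n z^n$ is analytic in $|z|<2$.

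Next, I would extract a closed form for $f$ using the Euler operator $z\,\partial_z$, which multiplies the coefficient of $z^k$ by $k$. Writing $G(z) := \sum_n\bigl[\zeta(n+2)-1\bigr]z^n$, one has $f(z) = z^{-2}(z\partial_z)^N\bigl[z^2 G(z)\bigr]$. Combining (2.2), the functional equation (2.3), and $\sum_n z^n = 1/(1-z)$ gives $z\,G(z) = 1 - \gamma - \psi^{(0)}(2-z)$, so $z^2 G(z) = z\,g(z)$ with $g(z) := 1 - \gamma - \psi^{(0)}(2-z)$. From the Stirling expansion $(z\partial_z)^N = \sum_l S(N,l)\,z^l\,\partial_z^l$ together with the recurrence $S(N+1,l+1) = (l+1)S(N,l+1) + S(N,l)$, one derives the identity
\[
(z\partial_z)^N\bigl[z\,h(z)\bigr] = \sum_{l=0}^{N}S(N+1,l+1)\,z^{l+1}\,h^{(l)}(z),
\]
and applying it with $h = g$ yields
\[
f(z) = \sum_{l=0}^{N}S(N+1,l+1)\,z^{l-1}\,g^{(l)}(z).
\]

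Since $f$ is analytic at $z=1$, Lemma 2.1 gives $\lim_{z\to 1^-}\partial_z^M\bigl[(1-z)f(z)\bigr] = -M f^{(M-1)}(1)$, and by Proposition 1.1 the left-hand side of the theorem equals $-f^{(M-1)}(1)$. I would compute this by applying the Leibniz rule to each product $z^{l-1}\,g^{(l)}(z)$ and evaluating at $z=1$ via $g^{(l)}(1) = (-1)^{l+1}\psi^{(l)}(1)$ for $l\ge 1$, $g(1) = 1$, and $\psi^{(0)}(1) = -\gamma$. The $l=0$ term reproduces the ``base'' part of the right-hand side---namely the $\psi^{(M-j-1)}(1)$-sum together with the $(M-1)!(-1)^M$ arising from $g(1) = 1$ and the $\gamma(-1)^M(M-1)!$ piece coming from the $\psi^{(0)}(1)=-\gamma$ entry of that sum---thereby recovering Theorem 2.2 in the degenerate case $N=0$, while the terms with $l\ge 1$ regroup into $\sum_{l=1}^{N}S(N+1,l+1)(-1)^{l+1}g_{M,l}$. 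The hard part will be the careful double bookkeeping of signs, binomial coefficients, and falling factorials required to match the resulting double sum with the stated form of $g_{M,l}$.
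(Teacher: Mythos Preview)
Your approach is correct and coincides with the paper's proof in all essential respects: the same choice $\alpha_n=(n+2)^N(\zeta(n+2)-1)$, $\alpha=0$, the same closed form for $f$ in terms of $\psi^{(l)}(2-z)$, and the same endgame via Lemma~2.1 and the Leibniz rule.

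The one noteworthy difference is how the closed form is obtained. The paper quotes an external identity of Hashimoto--Kanemitsu--Tanigawa--Yoshimoto--Zhang (reference~[6], their Corollary~2) for $\sum_{k\ge2}k^{N}z^{k}\zeta(k,a)$, specializes to $a=1$, and then subtracts the standard Stirling-number evaluation of $\sum_{k}k^{N}z^{k}$ to reach
\[
\sum_{k\ge2}k^{N}z^{k}\bigl(\zeta(k)-1\bigr)=\sum_{l=1}^{N}S(N+1,l+1)(-1)^{l+1}\psi^{(l)}(2-z)z^{l+1}+z\,g(z),
\]
which after division by $z^{2}$ is exactly your $f(z)=\sum_{l=0}^{N}S(N+1,l+1)z^{l-1}g^{(l)}(z)$. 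You instead derive this formula from scratch by writing $z^{2}f(z)=(z\partial_z)^{N}\bigl[z\,g(z)\bigr]$ and invoking the operator identity $(z\partial_z)^{N}[z\,h]=\sum_{l=0}^{N}S(N+1,l+1)z^{l+1}h^{(l)}$, which you obtain from the Stirling expansion of $(z\partial_z)^{N}$ together with the recurrence for $S(N+1,l+1)$. Your route is therefore more self-contained---it eliminates the dependence on~[6]---while the paper's route is shorter by outsourcing that computation. From the point where the closed form is in hand, the two arguments are identical.
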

\begin{proof} From [6, Corollary 2], we find the delightful formula for integers $N\ge1$ and $\Re(a)>0,$
\begin{equation}\sum_{k\ge2}k^{N}z^{k}\zeta(k,a)=\sum_{l\ge1}^{N}S(N+1,l+1)l!\zeta(l+1,a-z)z^{l+1}-z(\psi^{(0)}(a-z)-\psi^{(0)}(a)),\end{equation}
for $|z|<|a|.$ $\zeta(s,a)$ is the Hurwitz zeta function [2]. We have also corrected the stated formula by instead having $N\ge1.$ We have also shifted the sum by replacing $l$ by $l+1$ for our convenience. Now $\lim_{n\rightarrow\infty}\zeta(n,a)=0$
if $a>1,$ $1$ if $a=1,$ $+\infty$ if $0<a<1.$ Hence the formula (2.5) is of the type of interest to our study only if $a=1.$ So, in that case, we put $a=1,$ and re-write (2.5) as 
\begin{equation}\sum_{k\ge2}k^{N}z^{k}\zeta(k)=\sum_{l\ge1}^{N}S(N+1,l+1)l!\zeta(l+1,1-z)z^{l+1}-z(\psi^{(0)}(1-z)-\psi^{(0)}(1)).\end{equation}
Differentiating (2.3) $l$ times we get that 
\begin{equation} \psi^{(l)}(2-z)=\psi^{(l)}(1-z)+(1-z)^{-l-1}(-1)^l l!.\end{equation}
Now using equation [1, eq.(2.15)], we have
\begin{equation}\sum_{k\ge0}k^Nz^k=\sum_{k\ge1}k^Nz^k=\sum_{l\ge0}^{N}S(N+1, l+1)l!(1-z)^{-l-1}z^{l+1}.\end{equation}
Now $S(n,1)=1$ for all non-negative integers $n,$ so we may write (2.8) for $N\ge1$ as
\begin{equation}\sum_{k\ge0}k^Nz^k=z(1-z)^{-1}+\sum_{l\ge1}^{N}S(N+1, l+1)l!(1-z)^{-l-1}z^{l+1}.\end{equation}
Using $\psi^{(l)}(z)=(-1)^{l+1}l!\zeta(l+1,z),$ and (2.7), we re-write (2.6) as 
\begin{equation}\sum_{k\ge2}k^{N}z^{k}\zeta(k)=\sum_{l\ge1}^{N}S(N+1,l+1)((-1)^{l+1}\psi^{(l)}(2-z)+(1-z)^{-l-1}l!)z^{l+1}\end{equation}
$$-z(\psi^{(0)}(1-z)-\psi^{(0)}(1)).$$
Now comparing equation (2.9) with (2.10), and noting $\psi^{(0)}(1)=-\gamma,$ we see that we have that
\begin{equation}\sum_{k\ge2}k^{N}z^{k}(\zeta(k)-1)=\sum_{l\ge1}^{N}S(N+1,l+1)(-1)^{l+1}\psi^{(l)}(2-z)z^{l+1}+z-z(1-z)^{-1}\end{equation}
$$-z(\psi^{(0)}(1-z)+\gamma).$$
Now we choose $\alpha_n=(n+2)^{N}(\zeta(n+2)-1)$ and note that since $1$ is removed from the first term in $\zeta(s)$ that $\lim_{n\rightarrow\infty}(n+2)^{N}(\zeta(n+2)-1)=0,$ since exponential growth is faster than polynomial growth. The far right side of equation (2.11) may be construed as (2.2). Multiplying both sides by $z^{-2},$ and applying Proposition 1.1 we use the formula 
\begin{equation}\lim_{z\rightarrow1^{-}}\frac{\partial^M}{\partial z^M}((1-z)\psi^{(l)}(2-z)z^{l-1})\end{equation}
$$=-M\lim_{z\rightarrow1^{-}}\frac{\partial^{M-1}}{\partial z^{M-1}}(\psi^{(l)}(2-z)z^{(l-1)})$$
$$=-M\sum_{j\ge0}\binom {M-1}{j}(-1)^{M-1-j}\psi^{(l+M-1-j)}(1)\frac{(l-1)!}{(l-1-j)!}.$$
We employed the trivial formula $\lim_{z\rightarrow1^{-}}\frac{\partial^M}{\partial z^M}(z^{l})=l!/(l-M)!$ in the last line. This proves the theorem after noting that the $M$-th derivative of $(1-z)z^{-1}-z^{-1}=-1$ is $0.$
\end{proof}
Note that since $N\ge1,$ Theorem 2.3 is not a generalization of Theorem 2.2 and so Theorem 2.2 is not redundant. Further, for integers $N\ge1,$ we have that $\psi^{(N)}(1)=(-1)^{N+1}N!\zeta(N+1).$
\section{Conclusion}
The conclusion we have come to here is that the summation formula that was established to prove interesting $q$-series identities may also be used to prove 
identities for series involving the Riemann zeta function. Some further interest should be directed toward finding expressions for sums of the form
$$\sum_{n\ge0}a_n(L(n+\sigma+1)-1),$$
where the $a_n$ are appropriately chosen for the series to converge, and $L(s)$ is a Dirichlet series which is assumed to have its first term to be $1$ and converges when $\Re(s)>\sigma.$ 
We believe this is a curious incidence where attractive results in one area of mathematics may be grouped
as a consequence of a formula which has produced attractive results in another area. 

1390 Bumps River Rd. \\*
Centerville, MA
02632 \\*
USA \\*
E-mail: alexpatk@hotmail.com
\end{document}